\newdefinition{defn}{Definition}[section]
\newdefinition{nim}[defn]{}
\newdefinition{rem}[defn]{Remark}
\newdefinition{ex}[defn]{Example}
\newtheorem{thm}{Theorem}[section]
\newcommand{\Hom}{\mathrm{Hom}}
\newcommand{\Ext}{\mathrm{Ext}}
\newcommand{\Aut}{\mathrm{Aut}}
\newcommand{\HH}{\mathrm{HH}}
\newcommand{\Hc}{\mathrm{H}}
\newcommand{\Ker}{\mathrm{Ker}}
\newcommand{\Ima}{\mathrm{Im}}
\newcommand{\res}{\mathrm{res}}
\newcommand{\Br}{\mathrm{Br}}
\begin{document}

\begin{frontmatter}



\title{On cohomology of saturated fusion systems and support varieties}


\author{Constantin-Cosmin Todea}

\address{Department of Mathematics, Technical University of Cluj-Napoca, Str. G. Baritiu 25,
 Cluj-Napoca 400027, Romania}

\ead{Constantin.Todea@math.utcluj.ro}
\begin{abstract} In this short note we study the cohomology algebra of saturated fusion systems using  finite groups which realize saturated fusion systems and  Hochschild cohomology of group algebras. A similar result to a theorem of Alperin, \cite{AlpMis} is proved for varieties of cohomology algebras of fusions systems associated to block algebras of finite groups.
\end{abstract}

\begin{keyword}
fusion system, cohomology, group algebra, variety, block

\MSC 20C20 \sep 16EXX
\end{keyword}

\end{frontmatter}


\section{Introduction}\label{sec1}
A saturated fusion system $\mathcal{F}$ on a finite $p$-group $P$ is a category whose objects are the subgroups of $P$ and whose morphisms satisfy certain axioms mimicking the behavior of a finite group $G$ having $P$ as a Sylow subgroup. The axioms of saturated fusion systems were invented by Puig in early 1990's. The cohomology algebra of a $p$-local finite group with coefficients in $\mathbb{F}_p$ is introduced in \cite[$\S 5$]{BLO}. Let $k$ be an algebraically closed field of characteristic $p$. We denote by $\HH^*(kG)$ the Hochschild cohomology algebra of the group algebra $kG$ and by $\Hc^*(G,k)$ the cohomology algebra of the group $G$ with trivial coefficients.  As in \cite{LiTr} we will use the language of homotopy classes of chain maps (see \cite[2.8, 4.2]{LiTr}). We denote by $\Hc^*(\mathcal{F})$ the algebra of stable elements of $\mathcal{F}$, i.e. the cohomology algebra of the saturated fusion system $\mathcal{F}$, which is the subalgebra of $\Hc^*(P,k)$ consisting of elements $[\zeta]\in \Hc^*(P,k)$ such that
$$\res^P_Q([\zeta])=\res_{\varphi}([\zeta]),$$
for any $\varphi\in\Hom_{\mathcal{F}}(Q,P)$ and any subgroup $Q$ of $P$.
This is the main object of study in this paper. Moreover Broto, Levi and Oliver showed that any saturated fusion system $\mathcal{F}$ has  a non-unique $P-P$-biset $X$ with certain properties formulated by Linckelmann and Webb (see \cite[Proposition 5.5]{BLO}). Such a $P-P$-biset $X$ is called a characteristic biset. Using this biset, S. Park noticed in \cite{Par} a result which says that a saturated fusion system can be realized by a finite group. This finite group is $G=\Aut(X_P)$, that is the group of bijections of the characteristic biset $X$, preserving the right $P$-action. So, by \cite[Theorem 3]{Par}, we can identify $\mathcal{F}$ with $\mathcal{F}_P(G)$ which is the fusion system on $P$ such that for every $Q,R\leq P$ we have
$$\Hom_{\mathcal{F}_P(G)}(Q,R)=\{\varphi:Q\rightarrow R\mid \exists x\in G~ s.t.~ \varphi(u)=xux^{-1},\forall u\in Q\}.$$

In Section \ref{sec2} for a finite group $G$ and $P$ a $p$-subgroup such that  $\mathcal{F}_P(G)$ is a saturated fusion system  we associate and analyze a restriction map from the cohomology algebra of the group $G$ with coefficients in the field $k$ to the cohomology algebra of the fusion system, $\Hc^*(\mathcal{F}_P(G))$. For shortness we denote $\mathcal{F}=\mathcal{F}_P(G)$ and this map by $\rho_{\mathcal{F},G}$. From the above notice that any saturated fusion system $\mathcal{F}$ can be identified with a saturated fusion system of the form $\mathcal{F}_P(G)$, where $G=\Aut(X_P)$ for $X$ a characteristic biset; but we prefer to work under a more general setup.  Although  some results  are straightforward translations of results in the literature, to prove them we need the machinery of transfer maps between Hochschild cohomology algebras of group algebras developed in \cite{LiTr}. Since $\Hc^*(\mathcal{F})$ is a graded commutative finitely generated $k$-algebra we can associate the spectrum of maximal ideals, i.e. the variety denoted $V_{\mathcal{F}}$. For $U$ a finitely generated $kP$-module we define also a support variety, denoted $V_{\mathcal{F}}(U)$, in a similar way  as the usual support variety in group cohomology. The main result of this section is Theorem \ref{thmmain}, where we prove that $\rho_{\mathcal{F},G}$ induces a finite map on varieties and that we can recover $V_{\mathcal{F}}(U)$ from the support variety $V_G(U)$ associated to any finitely generated $kG$-module $U,$ which was first introduced by Carlson in \cite{Carl}.

In Section \ref{sec4} we consider only fusion systems associated to blocks. If $b$ is a block idempotent of the group algebra $kG$ (i.e. a primitive idempotent in the center of the group algebra) with defect group $P$ then it is well known that the set of $b$-Brauer pairs  form a $G$-poset which defines a saturated fusion system. We denote this category by $\mathcal{F}_{G,b}$. By considering a subgroup $H$ in $G$ and a block $c$ of $kH$ with defect group $Q\leq P$ such that $\mathcal{F}_{H,c}$ is a fusion subsystem of $\mathcal{F}_{G,b}$ we can define a restriction map from the cohomology of $\mathcal{F}_{G,b}$ to the cohomology of $\mathcal{F}_{H,c}$, denoted $\res_{b,c}$. In the main theorem of this short note we find necessary and sufficient conditions for this restriction to induce a bijective map from $V_{\mathcal{F}_{G,b}}$ to $V_{\mathcal{F}_{H,c}}$.

For definitions, notations and basic results from block theory  and fusion system theory we follow \cite{The} and \cite{AsKeOl}.
\section{Cohomology of a finite group realizing a saturated fusion system, restrictions and varieties}\label{sec2}

Let $G$ be a finite group and  $P$ be a $p$-subgroup of $G$. We denote by $M$ the $k$-module $kG$ regarded as $kG-kP$-bimodule. In \cite{LiTr} M. Linckelmann defines transfer maps between Hochschild cohomology algebras of symmetric algebras and establishes the compatibility of restriction and transfer maps for group cohomology with these transfer maps for group algebras through some embedding, which we call diagonal induction. Recall the notations of these well-known injective homomorphisms of algebras
$$\delta_G:\Hc^*(G,k)\rightarrow \HH^*(kG), ~\delta_P:\Hc^*(P,k)\rightarrow \HH^*(kP).$$
Moreover from \cite[Propostion 4.8]{LiTr} we have
$\Ima \delta_G\subseteq \HH^*_M(kG),$ where $\HH^*_M(kG)$ is the subalgebra of $M$-stable elements in $\HH^*(kG)$. See \cite[Definition 3.1]{LiTr} for the general case and for the definition of projective elements. By \cite[Example 3.9]{LiTr} we know that the projective elements are in this case $\pi_{M^*}=1_{kP}$, which is invertible in $Z(kP)$ and $\pi_M=[G:P]1_{kG}$. It follows that there is a normalized transfer map $T_{M^*}$ which is equal to $t_{M^*}$.
Let $U$ be a finitely generated $kG$-module, where $G$ is a finite group. The cohomological variety of $U$ (denoted $V_G(U)$), introduced by Carlson in \cite{Carl}, is defined to be the subvariety of the maximal ideal spectrum of $\Hc^*(G,k)$ (denoted $V_G$) determined by $I^*_G(U)$, that is the variety of the quotient algebra $\Hc^*(G,k)/I_G^*(U)$. Here $I_G^*(U)$ is the kernel of the graded $k$-algebra homomorphism $\Hc^*(G,k)\rightarrow\Ext^*_{kG}(U,U)$ induced by the functor $-\otimes_kU$. For more details see \cite{BenII} or \cite{CaTobook}. Moreover, M. Linckelmann noticed in \cite[2.9]{LiVa} that the ideal $I_G^*(U)$ is the kernel of the composition
$$\xymatrix{\Hc^*(G,k)\ar[rr]^{\delta_G}
&&\HH^*(kG)\ar[rr]^{\alpha_U}&&\Ext^*_{kG}(U,U)},$$
where $\alpha_U$ is induced by the functor $-\otimes_{kG}U$.
We denote by $V_{\mathcal{F}}$ the variety of the graded $k$-algebra $\Hc^*(\mathcal{F})$. Let $I^*_{\mathcal{F}}(U)$ be the kernel of the composition of graded $k$-algebra homomorphisms
$$\xymatrix{\Hc^*(\mathcal{F})\ar[rr]^{\delta_P}
&&\HH^*(kP)\ar[rr]^{\alpha_U}&&\Ext^*_{kP}(U,U)},$$
where in the right side $U$ is regarded as $kP$-module by restriction. We define $V_{\mathcal{F}}(U)$ as the subvariety of $V_{\mathcal{F}}$ which consists of all maximal ideals containing $I_{\mathcal{F}}^*(U)$. For $E$ a subgroup of $P$ we denote by $r_{P,E}^*:V_E\rightarrow V_{\mathcal{F}}$ the map between varieties induced by the composition
$$\xymatrix{H^*(\mathcal{F})\ar@{^{(}->}[rr]^{i_P}
&&\Hc^*(P,k)\ar[rr]^{\res^P_E}&&\Hc^*(E,k)}.$$
We need to state the following remark which includes well-known results.
\begin{rem}\label{PropBLO} Let $P$ be a $p$-subgroup of a finite group $G$ such that $\mathcal{F}=\mathcal{F}_P(G)$ is a saturated fusion system on $P$. Let $U$ be a finitely generated $kG$-module. Then $V_{\mathcal{F}}(U)=i_P^*(V_P(U))$ where  $i_P^*:V_P\rightarrow V_{\mathcal{F}}$ is the induced finite surjective map. Moreover, by applying \cite[Theorem 9.4.3]{CaTobook} we have that $V_{\mathcal{F}}(U)=\bigcup_{E}r_{P,E}^*(V_E(U))$, where $E$ runs over the set of elementary abelian subgroups of $P$
\end{rem}
For $P$ a $p$-subgroup of a finite group $G$ such that $\mathcal{F}=\mathcal{F}_P(G)$ is a saturated fusion system on $P$ we denote by
$\rho_{\mathcal{F},G}:\Hc^*(G,k)\rightarrow\Hc^*(\mathcal{F}),$ the homomorphism of graded $k$-algebras induced by restriction.
\begin{thm}\label{thmmain} Let $P$ be a $p$-subgroup of a finite group $G$ such that $\mathcal{F}=\mathcal{F}_P(G)$ is a saturated fusion system on $P$.
\begin{itemize}
\item[(i)] The homomorphism of $k$-algebras $\rho_{\mathcal{F},G}$ induces a finite map $\rho_{\mathcal{F},G}^*:V_{\mathcal{F}}\rightarrow V_G.$
\item[(ii)]  For any finitely generated $kG$-module $U$ we have $$V_{\mathcal{F}}(U)=(\rho_{\mathcal{F},G}^*)^{-1}(V_G(U)).$$
\end{itemize}
\end{thm}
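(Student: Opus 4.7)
The plan is to combine Evens's finite generation theorem with the Quillen-style stratification supplied by Remark \ref{PropBLO}, throughout using the factorization $i_P \circ \rho_{\mathcal{F},G} = \res^G_P$, so that $\rho^*_{\mathcal{F},G} \circ i_P^* = \res^{G,*}_P$ on maximal ideal spectra.

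For part (i), I would show that $\Hc^*(\mathcal{F})$ is finitely generated as an $\Hc^*(G,k)$-module via $\rho_{\mathcal{F},G}$. By Evens's theorem, $\Hc^*(P,k)$ is finitely generated as an $\Hc^*(G,k)$-module via $\res^G_P$, and $\Hc^*(G,k)$ is a Noetherian commutative $k$-algebra. Since $\Hc^*(\mathcal{F}) \subseteq \Hc^*(P,k)$ is an $\Hc^*(G,k)$-submodule of a finitely generated module, it is automatically finitely generated over $\Hc^*(G,k)$, so $\rho^*_{\mathcal{F},G}$ is a finite morphism of affine varieties.

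For part (ii), the inclusion $V_{\mathcal{F}}(U) \subseteq (\rho^*_{\mathcal{F},G})^{-1}(V_G(U))$ reduces to showing $\rho_{\mathcal{F},G}(I_G^*(U)) \subseteq I_{\mathcal{F}}^*(U)$. Using the Linckelmann presentation $I_P^*(U_{|P}) = \ker(\alpha_U \circ \delta_P)$, one sees that $I_{\mathcal{F}}^*(U) = \Hc^*(\mathcal{F}) \cap I_P^*(U_{|P})$ inside $\Hc^*(P,k)$, while the naturality of the functor $-\otimes U$ under restriction from $G$ to $P$ yields $\res^G_P(I_G^*(U)) \subseteq I_P^*(U_{|P})$; the inclusion follows at once. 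For the reverse inclusion, I would use the Quillen-style decomposition from Remark \ref{PropBLO}, namely $V_{\mathcal{F}} = \bigcup_E r^*_{P,E}(V_E)$ and $V_{\mathcal{F}}(U) = \bigcup_E r^*_{P,E}(V_E(U_{|E}))$, where $E$ runs over the elementary abelian subgroups of $P$. Given $\mathfrak{m} \in (\rho^*_{\mathcal{F},G})^{-1}(V_G(U))$, write $\mathfrak{m} = r^*_{P,E}(\mathfrak{p})$ for some such $E$ and some $\mathfrak{p} \in V_E$. Since $\rho^*_{\mathcal{F},G} \circ r^*_{P,E} = \res^{G,*}_E$ (from $\res^G_E = \res^P_E \circ \res^G_P$), we get $\res^{G,*}_E(\mathfrak{p}) \in V_G(U)$, and the classical elementary abelian identification $V_E(U_{|E}) = (\res^{G,*}_E)^{-1}(V_G(U))$ (a consequence of Carlson's rank variety theorem together with Quillen stratification for $V_G$) forces $\mathfrak{p} \in V_E(U_{|E})$. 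Hence $\mathfrak{m} \in r^*_{P,E}(V_E(U_{|E})) \subseteq V_{\mathcal{F}}(U)$.

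The main technical input is the elementary abelian preimage identity $V_E(U_{|E}) = (\res^{G,*}_E)^{-1}(V_G(U))$, which is standard in the support variety literature but is not purely formal and must be cited; modulo this ingredient, the proof consists of the Evens--Noetherian argument for (i) together with the Quillen-stratification diagram chase, carried out via $r^*_{P,E}$, for (ii).
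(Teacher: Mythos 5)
Your proof is correct and follows essentially the same overall strategy as the paper's: establish the easy inclusion $V_{\mathcal{F}}(U)\subseteq(\rho_{\mathcal{F},G}^*)^{-1}(V_G(U))$ by comparing the defining ideals, and then use the elementary-abelian decomposition of Remark \ref{PropBLO} together with a subgroup theorem to push any point of $(\rho_{\mathcal{F},G}^*)^{-1}(V_G(U))$ into $V_{\mathcal{F}}(U)$. The places where you diverge are genuine, if modest, simplifications. For the forward inclusion, the paper routes the argument through Linckelmann's Hochschild transfer $T_{M^*}$ and the commutative diagrams of \cite{LiTr} and \cite{LiVa}, producing $\rho_{\mathcal{F},G}(I_G^*(U))\subseteq I_{\mathcal{F}}^*(U)$ after two diagram pastings; you instead observe directly that $I_{\mathcal{F}}^*(U)=\Hc^*(\mathcal{F})\cap I_P^*(U|_P)$ and that $\res^G_P(I_G^*(U))\subseteq I_P^*(U|_P)$ by ordinary naturality of $-\otimes_k U$ under restriction, which sidesteps the Hochschild machinery entirely (the paper needs that machinery elsewhere, but not logically for this step). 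For the reverse inclusion, the paper expresses $\rho_{\mathcal{F},G}^*(\alpha)$ simultaneously as $(\res^G_E)^*(\alpha')$ and as $(\res^G_{E'})^*(\beta')$ for two elementary abelian subgroups and then invokes \cite[Proposition 9.6.1]{CaTobook}, whereas you apply the Subgroup Theorem $V_E(U|_E)=(\res^{G,*}_E)^{-1}(V_G(U))$ (\cite[Theorem 9.6.2]{CaTobook}, which the paper itself cites immediately afterward) in one step; these two references do the same job, and your route is the more streamlined of the two. Finally, your Evens--Noetherian argument for (i), namely that $\Hc^*(\mathcal{F})$ is an $\Hc^*(G,k)$-submodule of the finitely generated module $\Hc^*(P,k)$ over a Noetherian ring, is precisely the content behind the paper's unelaborated remark that (i) is ``straightforward to check,'' so there is no gap there either.
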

\begin{proof}

Statement (i) is straightforward to check.
Composing the commutative diagram from \cite[Theorem 5.1]{LiVa}, adapted to our situation,  with the obvious commutative diagram (see \cite[Proposition 4.7]{LiTr})
$$\xymatrix{\Hc^*(G,k)\ar[rr]^{\delta_G}\ar[d]^{\rho_{\mathcal{F},G}} && \HH^*_M(kG) \ar[d]^{T_{M^*}} \\
                                                     \Hc^*(\mathcal{F})\ar[rr]^{\delta_P}
                                                     &&\HH^*(kP)
},$$ we obtain the following commutative diagram
$$\xymatrix{\Hc^*(G,k)\ar[rr]^{\alpha_U\circ\delta_G}\ar[d]^{\rho_{\mathcal{F},G}} && \Ext_{kG}^*(U,U) \ar[d]^{\beta_{M^*,U}} \\
                                                     \Hc^*(\mathcal{F})\ar[rr]^{\alpha_{M^*\otimes_{kG} U}\circ\delta_P}
                                                     &&\Ext_{kP}^*(U,U)
},$$
where $\beta_{M^*,U}$ is defined by the functor $M^*\otimes_{kG}-$. It follows  that $\rho_{\mathcal{F},G}(I_G^*(U))\subseteq I_{\mathcal{F}}^*(U)$ (since $M^*\otimes_{kG}U=U$ as $kP$-module), and then $V_{\mathcal{F}}(U)\subseteq(\rho_{\mathcal{F},G}^*)^{-1}(V_G(U)).$
Suppose now that $\alpha\in V_{\mathcal{F}}$ and $\beta\in V_G(U)$ such that $\beta=\rho_{\mathcal{F},G}^*(\alpha)$. We take $U=k$ as trivial $kP$-module in Remark \ref{PropBLO} to obtain that there is an elementary abelian subgroup of $P$ and $\alpha'\in V_E$ such that $\alpha=r_{P,E}^*(\alpha')$. By \cite[Theorem 9.4.3]{CaTobook} there is $E'\leq P$ an elementary abelian subgroup and $\beta'\in V_{E'}(U)$ such that $\beta=(\res^G_{E'})^*(\beta')$. The following equalities hold
$$\rho_{\mathcal{F},G}^*(\alpha)=\rho_{\mathcal{F},G}^*(r_{P,E}^*(\alpha'))=((\res^G_P)^*\circ (\res^P_E\circ i_P)^*)(\alpha')=$$$$(\res^P_E\circ i_P\circ \res^G_P)^*(\alpha')=(\res^G_E)^*(\alpha').$$
We also have $\rho_{\mathcal{F},G}^*(\alpha)=\beta=(\res^G_{E'})^*(\beta')$. It follows that $(\res^G_{E'})^*(\beta')=(\res^G_E)^*(\alpha')$. By \cite[Proposition 9.6.1]{CaTobook} we conclude that $\alpha'\in V_{E'}(U)$, hence $\alpha=r_{P,E}^*(\alpha')\in V_{\mathcal{F}}(U)$ (see Remark \ref{PropBLO}).
\end{proof}
We end this section with the so called Subgroup Theorem (\cite[Theorem 9.6.2]{CaTobook}) for varieties associated to cohomology algebras of saturated fusion systems. Let $\mathcal{F}_1$ be a  saturated fusion system on a $p$-group $P_1$ and $\mathcal{F}_2$ a saturated fusion system on a $p$-group $P_2$ such that $\mathcal{F}_1$ is a fusion subsystem of $\mathcal{F}_2$, where $P_1$ is a subgroup of $P_2$. It is easy to check that $\res^{P_2}_{P_1}(\Hc^*(\mathcal{F}_2))\subseteq \Hc^*(\mathcal{F}_1)$. We denote by $$\res_{\mathcal{F}_2,\mathcal{F}_1}:\Hc^*(\mathcal{F}_2)\rightarrow \Hc^*(\mathcal{F}_1)$$
the homomorphism of $k$-algebras defined by $\res_{\mathcal{F}_2,\mathcal{F}_1}([\zeta])=\res^{P_2}_{P_1}([\zeta])$ for any $[\zeta]\in \Hc^*(\mathcal{F}_2)$.

\begin{thm} With the above assumptions and notations let $U$ be a finitely generated $kP_2$-module. Then
$$V_{\mathcal{F}_1}(U)=(\res_{\mathcal{F}_2,\mathcal{F}_1}^*)^{-1}(V_{\mathcal{F}_2}(U)).$$
\end{thm}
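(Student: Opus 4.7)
The plan is to imitate, step by step, the proof of Theorem \ref{thmmain}, with the group $G$ there replaced by the ambient fusion system $\mathcal{F}_2$ and the map $\rho_{\mathcal{F},G}$ replaced by $\rho:=\res_{\mathcal{F}_2,\mathcal{F}_1}$. The key observation is that $\rho$ is induced at the level of $\Hc^*(P_i,k)$ by the ordinary restriction $\res^{P_2}_{P_1}$, so the $kP_2$-$kP_1$-bimodule $N:=kP_2$, together with its normalized transfer $T_{N^*}:\HH^*_N(kP_2)\to\HH^*(kP_1)$, plays the role that $M=kG$ played in Theorem \ref{thmmain}. Moreover $N^*\otimes_{kP_2}U\cong U$ as a $kP_1$-module (it is just scalar restriction along $P_1\leq P_2$), which is the analog of the remark used to collapse the right-hand column of the diagram in the earlier proof.

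For the inclusion $V_{\mathcal{F}_1}(U)\subseteq(\rho^*)^{-1}(V_{\mathcal{F}_2}(U))$ I would splice the commutative square of \cite[Proposition~4.7]{LiTr} (in the form invoked in the proof of Theorem \ref{thmmain}, but with $G,P$ replaced by $P_2,P_1$) with the square induced by $\alpha_U$ and the functor $N^*\otimes_{kP_2}-$, obtaining
$$\xymatrix{\Hc^*(\mathcal{F}_2)\ar[rr]^{\alpha_U\circ\delta_{P_2}}\ar[d]_{\rho} && \Ext_{kP_2}^*(U,U)\ar[d]^{\beta_{N^*,U}}\\ \Hc^*(\mathcal{F}_1)\ar[rr]^{\alpha_U\circ\delta_{P_1}} && \Ext_{kP_1}^*(U,U)}$$
From this $\rho(I^*_{\mathcal{F}_2}(U))\subseteq I^*_{\mathcal{F}_1}(U)$, and hence the inclusion on varieties, is immediate.

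For the reverse inclusion I would copy the end of the proof of Theorem \ref{thmmain}(ii). Fix $\alpha\in V_{\mathcal{F}_1}$ with $\rho^*(\alpha)\in V_{\mathcal{F}_2}(U)$. Remark \ref{PropBLO} applied to $\mathcal{F}_1$ with trivial coefficients provides an elementary abelian $E\leq P_1$ and $\alpha'\in V_E$ with $\alpha=r^*_{P_1,E}(\alpha')$; the same remark applied to $\mathcal{F}_2$ with $U$ provides an elementary abelian $E'\leq P_2$ and $\beta'\in V_{E'}(U)$ with $\rho^*(\alpha)=r^*_{P_2,E'}(\beta')$. Using $E\leq P_1\leq P_2$ and transitivity of restriction, the same string of equalities as in the proof of Theorem \ref{thmmain}(ii) yields
$$r^*_{P_2,E}(\alpha')=\rho^*(\alpha)=r^*_{P_2,E'}(\beta')\quad\text{in }V_{\mathcal{F}_2}.$$
A final appeal to \cite[Proposition~9.6.1]{CaTobook} is then supposed to force $\alpha'\in V_E(U)$, after which Remark \ref{PropBLO} applied to $\mathcal{F}_1$ delivers $\alpha=r^*_{P_1,E}(\alpha')\in V_{\mathcal{F}_1}(U)$. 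The main obstacle is exactly this last step: in Theorem \ref{thmmain}(ii) the analogous equality lived in $V_G$, which made the appeal to Carlson's proposition direct, whereas here it lives in the quotient $V_{\mathcal{F}_2}$, so one must first lift the identification back to $V_{P_2}$ along the finite surjection $i_{P_2}^{\ast}$, or else realize $\mathcal{F}_2=\mathcal{F}_{P_2}(G_2)$ via Park's theorem and argue in $V_{G_2}$.
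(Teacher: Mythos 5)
Your forward inclusion is exactly the paper's argument: take $X=kP_2$ as a $kP_2$-$kP_1$-bimodule, splice the square from \cite[Proposition~4.7]{LiTr} relating $\delta_{P_i}$ and the normalized transfer $T_{X^*}$ with the square from \cite[Theorem~5.1]{LiVa} relating $\alpha_U$ and $\beta_{X^*,U}$, and use $X^*\otimes_{kP_2}U\cong U$ as a $kP_1$-module to conclude $\res_{\mathcal{F}_2,\mathcal{F}_1}(I^*_{\mathcal{F}_2}(U))\subseteq I^*_{\mathcal{F}_1}(U)$. That part is fine and matches the paper line for line.

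The concern you raise about the reverse inclusion is real, and it is not a mere bookkeeping issue. In the proof of Theorem~\ref{thmmain}(ii), the crucial equality $(\res^G_{E'})^*(\beta')=(\res^G_E)^*(\alpha')$ lives in $V_G$, which is the variety of the group cohomology ring of a genuine finite group containing both elementary abelian subgroups; that is precisely the setting in which \cite[Proposition~9.6.1]{CaTobook} applies. Here the analogous identity $r^*_{P_2,E}(\alpha')=r^*_{P_2,E'}(\beta')$ lives only in $V_{\mathcal{F}_2}$. Both workarounds you propose run into trouble: a lift along the finite surjection $i_{P_2}^*$ is not unique, and two points in the same fibre of $i_{P_2}^*$ need not both lie in $V_{P_2}(U)$ for a general $kP_2$-module $U$; and if one realizes $\mathcal{F}_2=\mathcal{F}_{P_2}(G_2)$ by Park's theorem and pushes forward to $V_{G_2}$, then $U$ is only a $kP_2$-module, not a $kG_2$-module, so Carlson's proposition does not apply directly either. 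The paper's own treatment of the reverse inclusion is the single sentence ``use Remark~\ref{PropBLO} and \cite[Proposition~9.6.1]{CaTobook}'', which glosses over exactly this point.

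In fact, I believe the reverse inclusion can genuinely fail for arbitrary finitely generated $kP_2$-modules $U$. Take $P_1=P_2=P=(\mathbb{Z}/2)^2$, $\mathcal{F}_2=\mathcal{F}_P(A_4)$, $\mathcal{F}_1=\mathcal{F}_P(P)$, and $U=k[P/L]$ with $L=\langle a\rangle$ a subgroup of order $2$. Then $V_{\mathcal{F}_1}(U)=V_P(U)$ is a single line $\ell_a\subset\mathbb{A}^2$ by the induction formula for support varieties, whereas $\res_{\mathcal{F}_2,\mathcal{F}_1}=i_P$, so $(\res_{\mathcal{F}_2,\mathcal{F}_1}^*)^{-1}(V_{\mathcal{F}_2}(U))=(i_P^*)^{-1}(i_P^*(\ell_a))$. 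Since the three involutions of $P$ are $\mathcal{F}_2$-conjugate, $i_P^*$ identifies $\ell_a$, $\ell_b$, $\ell_{ab}$, so this preimage contains $\ell_a\cup\ell_b\cup\ell_{ab}\supsetneq\ell_a$. Thus the statement needs an extra hypothesis on $U$ (for instance that $U$ is $\mathcal{F}_2$-stable, or extends to a module over a group realizing $\mathcal{F}_2$), and the step you flagged cannot be filled in as the proof is currently organized. Your instinct that this was the weak point of the argument is correct.
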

\begin{proof}
Let $X$ be $kP_2$ viewed as $kP_2-kP_1$-bimodule. It is easy to verify the commutativity of  the following diagram
$$\xymatrix{\Hc^*(\mathcal{F}_2)\ar[rr]^{\delta_{P_2}}\ar[d]^{\res_{\mathcal{F}_2,\mathcal{F}_1}} && \HH^*_X(kP_2) \ar[d]^{T_{X^*}} \\
                                                     \Hc^*(\mathcal{F}_1)\ar[rr]^{\delta_{P_1}}
                                                     &&\HH^*_{X^*}(kP_1)
}.$$

We know that $I_{\mathcal{F}_1}(U)=\Ker(\alpha_U\circ\delta_{P_1})$ and that $I_{\mathcal{F}_2}(U)=\Ker(\alpha_U\circ \delta_{P_2})$. By \cite[Theorem 5.1]{LiVa} the following diagram is commutative
$$\xymatrix{\HH^*_X(kP_2)\ar[rr]^{\alpha_U}\ar[d]^{T_{X^*}} && \Ext^*_{kP_2}(U,U) \ar[d]^{\beta_{X^*,U}} \\
                                                     \HH^*_{X^*}(kP_1)\ar[rr]^{\alpha_{X^*\otimes_{kP_1}U}}
                                                     &&\Ext_{kP_1}^*(U,U)
}.$$
The above diagrams assure us the inclusion $\res_{\mathcal{F}_2,\mathcal{F}_1}^*(V_{\mathcal{F}_1}(U))\subseteq V_{\mathcal{F}_2}(U).$
For the reverse inclusion we use Remark \ref{PropBLO} and \cite[Proposition 9.6.1]{CaTobook}.
\end{proof}
\begin{rem} The above theorems could have applications in studying when $\rho_{\mathcal{F},G}$ and $\res_{\mathcal{F}_2,\mathcal{F}_1}$ induce bijective maps on varieties, or when are $F$-isomorphisms. A missing ingredient for doing this is the Quillen stratification for fusion systems which is not appeared yet in a suitable form in the literature. We do not pursue this in here. However, for saturated fusion systems associated to blocks we obtain some positive answers; as we will see in Section \ref{sec4}.
\end{rem}
\section{A theorem of Alperin for cohomology of fusion systems associated to blocks}\label{sec4}
In this section we work only with cohomology algebras of saturated fusion systems associated to $p$-blocks. Let $H$ be a finite subgroup of $G$.
Let $b$ be a block of $kG$ with defect pointed group $P_{\gamma}$ and let $c$ be a block of $kH$ with defect group $Q_{\delta}$ such that $Q\leq P$. Let $i\in \gamma$ be a source idempotent. By Brou\'{e} and Puig \cite[Theorem 1.8]{BrPu} for any subgroup $R$ of $P$, there is a unique block
$e_R$ of $C_G(R)$ such that $\Br_R(i)e_R \neq 0$. Then $(R, e_R)$ is a $b$-Brauer pair, and $e_R$ is also the unique block of $C_G(R)$ such that $(R, e_R) \leq (P, e_P)$. We define
the saturated fusion system $\mathcal{F}_{(P,e_P)}(G,b)$ the  category which has as objects the set of subgroups of $P$; for any two subgroups $R,S$ of $P$ the set of morphisms from $R$ to $S$ in  $\mathcal{F}_{(P,e_P)}(G,b)$ is the set of (necessarily injective) group homomorphisms $\varphi:R\rightarrow S$ for which there is an element $x \in G$ satisfying $\varphi(u)=xux^{-1}$ for all $u\in R$ and satisfying $^x(R, e_R)\leq (S, e_S)$. For shortness we will use the notation $\mathcal{F}_{G,b}$ for this category. The category $\mathcal{F}_{G,b}$ is, up to canonical isomorphism of
categories, independent of the choice of the source idempotent $i$. The analogous definition holds for $\mathcal{F}_{H,c}$; here we denote by $(R,f_R)$ an $c$-Brauer pair, for $R\leq Q$. We denote by $\mathcal{E}_P(b)$,(respectively $\mathcal{E}_Q(c)$) the set of all elementary abelian subgroups of $P$, (respectively $Q$). We mention that the cohomology algebra of the block $b$, which is $\Hc^*(\mathcal{F}_{G,b})$, was introduced and analyzed before the cohomology of general fusion systems by M. Linckelmann in some papers (\cite{LiTr}, \cite{LiVa},\cite{LiQUill}) around the beginning of 2000's.

For the rest of the section let $b,c$ be blocks as above such that $\mathcal{F}_{H,c}$ is a fusion subsystem of $\mathcal{F}_{G,b}$. It is easy to check that $\res^P_Q$ induces a well-defined homomorphism of algebras
$$\res_{b,c}:\Hc^*(\mathcal{F}_{G,b})\rightarrow\Hc^*(\mathcal{F}_{H,c}).$$
Obviously $\res_{b,c}$ is the map $\res_{\mathcal{F}_{G,b},\mathcal{F}_{H,c}}$ defined at the end of Section \ref{sec2}.

In the following lines we introduce some notations adapted to our setting, regarding the Quillen stratification of $\Hc^*(\mathcal{F}_{G,b})$ obtained  in \cite{LiQUill}.  Let $E\in \mathcal{E}_P(b)$. The restriction map $r_{P,E}:\Hc^*(\mathcal{F}_{G,b})\rightarrow \Hc^*(E,k)$ induces  a map on varieties, which we denote $r_{E,b}^*:V_E\rightarrow V_{\mathcal{F}_{G,b}}.$
As usual we define the subvariety of $V_E$
$$V_E^+=V_E\setminus\bigcup_{F<E}(\res^E_F)^*(V_F),$$
and denote the subvarieties of $V_{\mathcal{F}_{G,b}}$
$$V_{G,E,b}=r_{E,b}^*(V_E),~~~ V_{G,E,b}^+=r_{E,b}^*(V_E^+).$$
Finally we set $W_G(E)=N_G(E,e_E)/EC_G(E)$, where $(E,e_E)$ is the unique $b$-Brauer pair such that $(E,e_E)\leq (P,e_P)$.
If $E\in\mathcal{E}_Q(c)$ we consider the analogous notations: $$r_{E,c}^*, V_{H,E,c}, V_{H,E,c}^+, W_H(E)=N_G(E,f_E)/EC_H(E).$$ Now the Quillen stratification is $$V_{\mathcal{F}_{G,b}}=\bigcup_{E}^{\circ}V_{G,E,b}^+,$$ where $E$ runs over a set of $\mathcal{F}_{G,b}$-conjugacy classes of elements in $\mathcal{E}_P(b)$.
\begin{defn}\label{defnweaklyelemsubfusion} Let $b,c$ be blocks of $kG$, respectively $kH$ such that $\mathcal{F}_{H,c}$ is a fusion subsystem of $\mathcal{F}_{G,b}$. We say that $\mathcal{F}_{H,c}$ is \emph{weakly elementary embedded} in $\mathcal{F}_{G,b}$ if the following conditions are satisfied:
\begin{itemize}
\item[(1)] Whenever $E\in \mathcal{E}_Q(c)$ then $W_H(E)\cong W_G(E)$;
\item[(2)] Every subgroup from $\mathcal{E}_P(b)$ is $\mathcal{F}_{G,b}$-conjugate to a subgroup from $\mathcal{E}_Q(c)$;
\item[(3)] If two subgroups from $\mathcal{E}_Q(c)$ are $\mathcal{F}_{G,b}$-conjugate then they are $\mathcal{F}_{H,c}$-conjugate.
\end{itemize}
\end{defn}

\begin{thm}\label{thmAlperin} Let $b,c$ be blocks of $kG$, respectively $kH$ such that $\mathcal{F}_{H,c}$ is a fusion subsystem of $\mathcal{F}_{G,b}$. The restriction map $\res_{b,c}:\Hc^*(\mathcal{F}_{G,b})\rightarrow\Hc^*(\mathcal{F}_{H,c})$ induces a bijective map $\res_{b,c}^*:V_{\mathcal{F}_{H,c}}\rightarrow V_{\mathcal{F}_{G,b}}$ if and only if $\mathcal{F}_{H,c}$ is weakly elementary embedded in $\mathcal{F}_{G,b}$.
\end{thm}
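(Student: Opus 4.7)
The plan is to reduce everything to the Quillen stratification of $\Hc^*(\mathcal{F}_{G,b})$ established in \cite{LiQUill}, namely the disjoint decomposition
$$V_{\mathcal{F}_{G,b}}=\bigsqcup_{E}V_{G,E,b}^+$$
indexed by $\mathcal{F}_{G,b}$-conjugacy classes of elementary abelian subgroups $E\in\mathcal{E}_P(b)$, together with the fact that $r_{E,b}^*$ induces a homeomorphism $V_E^+/W_G(E)\cong V_{G,E,b}^+$; the analogous statements hold for $\mathcal{F}_{H,c}$. The key compatibility is that for every $E\in\mathcal{E}_Q(c)$ one has $r_{E,b}=r_{E,c}\circ\res_{b,c}$, since both compositions equal $\res^P_E\circ i_P$, whence $r_{E,b}^*=\res_{b,c}^*\circ r_{E,c}^*$. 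From the inclusion $\mathcal{F}_{H,c}\subseteq\mathcal{F}_{G,b}$ one also obtains an inclusion $W_H(E)\leq W_G(E)$ inside $\Aut(E)$, so that condition (1) amounts to the equality $W_H(E)=W_G(E)$ as subgroups of $\Aut(E)$.

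For the sufficient direction, assume (1), (2), (3). Surjectivity: given $\alpha\in V_{\mathcal{F}_{G,b}}$, pick $E\in\mathcal{E}_P(b)$ with $\alpha\in V_{G,E,b}^+$; by (2), after $\mathcal{F}_{G,b}$-conjugation we may assume $E\in\mathcal{E}_Q(c)$ and write $\alpha=r_{E,b}^*(\alpha')=\res_{b,c}^*(r_{E,c}^*(\alpha'))$ for some $\alpha'\in V_E^+$. Injectivity: suppose $\res_{b,c}^*(\beta_1)=\res_{b,c}^*(\beta_2)$ with $\beta_i\in V_{H,E_i,c}^+$; the common image lies in both $V_{G,E_i,b}^+$, so disjointness forces $E_1,E_2$ to be $\mathcal{F}_{G,b}$-conjugate, and (3) upgrades this to $\mathcal{F}_{H,c}$-conjugacy, so we may take $E_1=E_2=E$. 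Writing $\beta_i=r_{E,c}^*(\alpha'_i)$ with $\alpha'_i\in V_E^+$, the hypothesis becomes $r_{E,b}^*(\alpha'_1)=r_{E,b}^*(\alpha'_2)$, so that $\alpha'_1$ and $\alpha'_2$ lie in a common $W_G(E)$-orbit, hence by (1) in a common $W_H(E)$-orbit, giving $\beta_1=\beta_2$.

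For the necessary direction we argue by contrapositives. If (2) fails with some $E\in\mathcal{E}_P(b)$ not $\mathcal{F}_{G,b}$-conjugate to any member of $\mathcal{E}_Q(c)$, the non-empty stratum $V_{G,E,b}^+$ misses $\res_{b,c}^*(V_{\mathcal{F}_{H,c}})=\bigcup_{E'\in\mathcal{E}_Q(c)}V_{G,E',b}^+$, contradicting surjectivity. If (3) fails with $E_1,E_2\in\mathcal{E}_Q(c)$ which are $\mathcal{F}_{G,b}$-conjugate but not $\mathcal{F}_{H,c}$-conjugate, then the disjoint non-empty strata $V_{H,E_1,c}^+$ and $V_{H,E_2,c}^+$ both land in the single stratum $V_{G,E_1,b}^+=V_{G,E_2,b}^+$, contradicting injectivity. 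Finally, if (1) fails and $W_H(E)\subsetneq W_G(E)$ for some $E\in\mathcal{E}_Q(c)$, then, since the action of $W_G(E)\leq\Aut(E)$ on $V_E^+$ is generically free (its action factors faithfully through $\Aut(E)$, whose natural action on $V_E$ has generically trivial stabilizers), the surjection $V_E^+/W_H(E)\to V_E^+/W_G(E)$ is not injective, and via the stratification homeomorphisms this yields a failure of injectivity of $\res_{b,c}^*$ on $V_{H,E,c}^+$.

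The main obstacle is invoking correctly the Quillen stratification of \cite{LiQUill} in the block setting, in particular verifying that, under the identifications $V_E^+/W_G(E)\cong V_{G,E,b}^+$ and their $\mathcal{F}_{H,c}$-analogues, the restriction $\res_{b,c}^*$ on the stratum $V_{H,E,c}^+$ coincides with the map induced by the inclusion $W_H(E)\hookrightarrow W_G(E)$. The genericity argument used to extract (1) from injectivity is the secondary delicate point; everything else is bookkeeping once the compatibility $r_{E,b}^*=\res_{b,c}^*\circ r_{E,c}^*$ is in hand.
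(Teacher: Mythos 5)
Your proof is correct and follows essentially the same route as the paper: both reduce to Linckelmann's Quillen stratification of $V_{\mathcal{F}_{G,b}}$ and $V_{\mathcal{F}_{H,c}}$, use the compatibility $r_{E,b}^*=\res_{b,c}^*\circ r_{E,c}^*$, and match conditions (1)--(3) with the injectivity/surjectivity of $\res_{b,c}^*$ stratum by stratum. The only noteworthy deviation is in the necessity of (1), where you appeal to ``generic freeness'' of the $W_G(E)$-action on $V_E^+$, whereas the paper cites outright freeness of this action, which is already part of the Quillen stratification for block varieties (\cite[Theorem 4.2, Proposition 4.3]{LiQUill}) and makes that step immediate.
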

\begin{proof}  Suppose that the conditions from Definition \ref{defnweaklyelemsubfusion} are satisfied. For injectivity let $m_1,m_2\in V_{\mathcal{F}_{H,c}}$ such that $\res_{b,c}^*(m_1)=\res_{b,c}^*(m_2)$. By the Quillen stratification for $V_{\mathcal{F}_{H,c}}$ there are two subgroup $E_1,E_2\in \mathcal{E}_Q(c)$ unique up to $\mathcal{F}_{H,c}$-conjugacy, and $\gamma_1\in V_{E_1}^+, \gamma_2\in V_{E_2}^+$ such that
$$m_1=r_{E_1,c}^*(\gamma_1), ~m_2=r_{E_2,c}^*(\gamma_2).$$
It follows that $$(\res_{b,c}^*\circ r_{E_1,c}^*)(\gamma_1)=(\res_{b,c}^*\circ r_{E_2,c}^*)(\gamma_2),$$
 and since
$$r_{E_1,c}\circ \res_{b,c}=r_{E_1,b},~r_{E_2,c}\circ \res_{b,c}=r_{E_2,b},$$
we obtain that  $ r_{E_1,b}^*(\gamma_1)=r_{E_2,b}^*(\gamma_2).$
Quillen stratification for $V_{\mathcal{F}_{G,b}}$ give us that $E_1$ is $\mathcal{F}_{G,b}$-conjugate to $E_2$, hence $E_1$ is $\mathcal{F}_{H,c}$- conjugate to $E_2$ (by  condition (3) of Definition \ref{defnweaklyelemsubfusion}). This allows us to choose $E_1=E_2=E,$ and we have $r_{E,b}^*(\gamma_1)=r_{E,b}^*(\gamma_2)$. The inseparable isogeny from \cite[Theorem 4.2, Proposition 4.3]{LiQUill}, $V_{G,E,b}^+\cong V_E^+/W_G(E)$, is given by $r_{E,b}^*$, hence $\gamma_1, \gamma_2$ are in the same orbit of the action of $W_G(E)$ on $V_E^+$. From Definition \ref{defnweaklyelemsubfusion}, condition (1) we obtain that $\gamma_1, \gamma_2$ are in the same orbit of the action of $W_H(E)$ on $V_E^+$, hence $m_1=r_{E,c}^*(\gamma_1)=r_{E,c}^*(\gamma_2)=m_2.$
Now let $m\in V_{\mathcal{F}_{G,b}}$. There is $F\in\mathcal{E}_P(b)$ and $\gamma\in V_{F}^+$ such that $m=r_{F,b}^*(\gamma)$. From condition (2) of the above definition  we know that there is $F'\in \mathcal{E}_Q(c)$ such that $F$ is $\mathcal{F}_{G,b}$-conjugate to $F'$. Then $r_{F,b}^*(\gamma)=r_{F',b}^*(\gamma)$ and $m=r_{F',b}^*(\gamma)=\res_{b,c}^*(r_{F',c}^*(\gamma))$.

Now we assume that $\res_{b,c}^*$ is bijective. Let $E_1, E_2\in\mathcal{E}_Q(c)$ which are $\mathcal{F}_{G,b}$-conjugate. Then $V_{G,E_1,b}^+=V_{G,E_2,b}^+$, that is $r_{E_1,b}^*(V_{E_1}^+)=r_{E_2,b}^*(V_{E_2}^+)$. It follows that
$$\res_{b,c}^*(r_{E_1,c}^*(V_{E_1}^+))=\res_{b,c}^*(r_{E_2,c}^*(V_{E_2}^+)).$$
Since $\res_{b,c}^*$ is injective we obtain that $$V_{H,E_1,c}^+=r_{E_1,c}^*(V_{E_1}^+)=r_{E_2,c}^*(V_{E_2}^+)=V_{H,E_2,c}^+,$$
and then $E_1,E_2$ are $\mathcal{F}_{H,c}$-conjugate, hence (3) is proved.
For (1) let $E\in \mathcal{E}_Q(c)$. We identify $V_{G,E,b}^+$ with the orbits of the action of $W_G(E)$ on $V_E^+$ and the same for $V_{H,E,c}^+$.
Since $\res_{b,c}^*$ is bijective and
$$\res_{b,c}^*(V_{H,E,c}^+)=\res_{b,c}^*(r_{E,c}^*(V_E^+))=r_{E,b}^*(V_E^+)=V_{G,E,b}^+,$$
we obtain that $\res_{b,c}^*$ induces a bijection from $V_{H,E,c}^+$ to $V_{G,E,b}^+$. This bijection, the free actions of $W_G(E),W_H(E)$ on $V_E^+$ and the existence of an embedding from $W_H(E)$ into $W_G(E)$ assure  the validity of condition (1); how $\res_{b,c}^*$ is defined is an important ingredient for proving isomorphism (1).
For (2) we consider $n\in V_{\mathcal{F}_{H,c}}$ and $m\in V_{\mathcal{F}_{G,b}}$ with vertex $E\in\mathcal{E}_P(b)$ and a source $\gamma\in V_E^+$ such that $\res_{b,c}^*(n)=m$. We get that a $\mathcal{F}_{G,b}$-conjugate of $E$ must be a vertex of $n$, hence in $\mathcal{E}_Q(c)$.
\end{proof}

\begin{rem} With the hypothesis from Theorem \ref{thmAlperin} if $b,c$ are the principal blocks we obtain an analogous statement to \cite[Theorem 2]{AlpMis}.
\end{rem}

\textbf{Acknowledgements.}
The author would like to express his sincere thanks to the referee for his/her comments and suggestions that contributed to shortening and improving previous versions of this note.






\end{document}